\documentclass[11pt,a4paper]{article}

\usepackage[T1]{fontenc}
\usepackage[utf8]{inputenc}
\usepackage{csquotes}

\usepackage{amsfonts, amsmath, amssymb, amsthm, dsfont, mathrsfs, latexsym, euscript}

\usepackage[numbers]{natbib}

\usepackage{tikz}
\usepackage{xcolor}
\usepackage{caption, subcaption}
\usepackage[colorlinks=true, citecolor=blue, linkcolor=black, urlcolor=blue]{hyperref}
\usepackage[left=3cm,right=3cm,top=2cm,bottom=2cm]{geometry}

\def\EE{\mathbb{E}}
\def\FF{\mathbb{F}}

\def\LL{\mathbb{L}}

\def\NN{\mathbb{N}}

\def\PP{\mathbb{P}}

\def\RR{\mathbb{R}}

\def\Bcal{\mathcal{B}}

\def\Fcal{\mathcal{F}}

\def\Ical{\mathcal{I}}

\def\Ncal{\mathcal{N}}

\def\Pcal{\mathcal{P}}

\def\Scal{\mathcal{S}}
\def\Tcal{\mathcal{T}}

\def\Wcal{\mathcal{W}}

\newcommand{\eps}{\varepsilon}
\def\E#1{\mathbb{E}\left[ #1 \right]}

\def\1{\mathds{1}}

\newtheorem{theorem}{Theorem}[section]
\makeatletter
\renewcommand{\thetheorem}{%
  \ifnum\c@subsection=0
    \thesection.\number\c@theorem
  \else
    \thesubsection.\number\c@theorem
  \fi}
\makeatother

\newtheorem{lemma}[theorem]{Lemma}

\newtheorem{definition}[theorem]{Definition}

\newtheorem{remark}[theorem]{Remark}

\numberwithin{equation}{section}
\makeatletter
\renewcommand{\theequation}{%
  \ifnum\c@subsection=0
    \thesection.\number\c@equation
  \else
    \thesubsection.\number\c@equation
  \fi}
\makeatother

\def\z{\mathrsfs{Z}}

\DeclareSymbolFontAlphabet{\mathrsfs}{rsfs}

\DeclareMathOperator*{\esssup}{ess\,sup}

\title{Gaussian approximation for the integrated intensity of counting processes: application to Hawkes processes}
\author{Benjamin Massat\footnote{Sorbonne Université, Laboratoire de Probabilités, Statistique et Modélisation, LPSM, UMR 8001, Paris, France. \; Email: \texttt{massat@lpsm.paris.fr}}{~}\footnote{Universit\'e de Toulouse, Insitut de Mathématiques de Toulouse, IMT, UMR 5219, Toulouse, France. \texttt{benjamin.massat@math.univ-toulouse.fr}\\  \url{https://benjamin-massat-439c85.pages.math.cnrs.fr/}} }

\date{\today}

\begin{document}
\maketitle

\begin{abstract}
In this paper, we establish quantitative central limit theorems for the integrated fluctuations of the intensity of a counting process. Building on the work of Coutin and using the Clark–Ocone formula, we first derive a general upper bound on the Wasserstein distance between a Gaussian random variable and the normalized integral of the centered intensity. We then apply this result to subcritical linear Hawkes processes. In this setting, we prove that the integrated fluctuations of the intensity converge in distribution to a centered Gaussian random variable, with a convergence rate governed by the tail behavior of the resolvent associated with the underlying Volterra equation.
\end{abstract}

\textbf{Keywords:} Berry–Esseen bound, Linear Hawkes process, Malliavin's calculus, Stein's method.\\
\textbf{Mathematics Subject classification (2020):} 60F05, 60G55, 60H07.

\section{Introduction}Introduced by \cite{hawkes_spectra_1971}, the Hawkes process $H$ is a self-exciting point process whose intensity, denoted by $\lambda$, is given by
\begin{equation}
\lambda_t = \mu + \int_{(0,t)} \phi(t-s) dH_s, \quad t > 0,
\end{equation}
where $\mu>0$ and $\phi:\RR_+\to\RR_+$ is a kernel function. While the linear Hawkes process was the first to be studied, nonlinear extensions have also been extensively investigated (see, for instance, \cite{bremaud_stability_1996,zhu_central_2013}). Over the past two decades, a large literature has been devoted to the limiting behaviour of Hawkes processes and, more recently, to quantitative versions of their limit theorems.

\subsection{Limit theorems for Hawkes processes}

In the linear case, \cite{bacry_limit_2013} established a law of large numbers and a functional central limit theorem (fCLT) for multivariate Hawkes processes observed over $[0,T]$ as $T\to+\infty$. In particular,
\begin{equation}
\label{eq:introcvs}
\left(
\frac{H_{tT}- \int_0^{tT} \lambda_s ds}{\sqrt{T}}
\right)_{t\in[0,1]}
\xrightarrow[]{T\to+\infty}
\left(\sigma B_t\right)_{t\in[0,1]},
\end{equation}
where the convergence holds in distribution with respect to the Skorokhod topology, $B$ is a standard Brownian motion, and $\sigma>0$. This result was subsequently extended to nonlinear Hawkes processes by \cite{zhu_nonlinear_2013}. In the linear case, the limiting variance is explicit and, in the univariate case, $\sigma^2=\frac{\mu}{(1-\|\phi\|_1)}$,
whereas the corresponding quantity is generally not explicit in the nonlinear setting. Similar limit theorems have subsequently been obtained in more general settings. Indeed, \cite{horst_functional_2024} established functional and scaling limit theorems under minimal assumptions on the kernel, including both subcritical ($ \left\| \phi \right\|_{\LL^1} < 1$) and critical regimes ($ \left\| \phi \right\|_{\LL^1} = 1$), while \cite{xu_scaling_2024} studied scaling limits for multivariate Hawkes processes.\\

The first quantitative result for one-dimensional marginals of nonlinear Hawkes processes was obtained by \cite{torrisi_gaussian_2016}. However, the resulting bound was not sufficient to quantify the convergence in \eqref{eq:introcvs}, since the corresponding Wasserstein bound does not vanish as $T\to+\infty$.

A series of works by \cite{hillairet_malliavin-stein_2022}, \cite{khabou_malliavin-stein_2021}, and \cite{privault_stein_2018} subsequently established quantitative central limit theorems for one-dimensional marginals of linear Hawkes processes. These results concern the compensated counting process, rather than the integrated fluctuations of the intensity considered in the present paper. More precisely, they obtained a bound of the form
\begin{equation}
\label{eq: maj_intro}
\Wcal_1\left(
\frac{H_T-\int_0^T\lambda_tdt}{\sqrt{T}},
\mathcal{N}(0,\sigma^2)
\right)
\leq \frac{C}{\sqrt{T}}.
\end{equation}
We also note that the works of \cite{hillairet_malliavin-stein_2022} and \cite{coutin_normal_2026} provide quantitative $L^1$-bounds for the time-averaged intensity (in the linear and nonlinear settings). More precisely, they establish bounds of the form
\begin{equation}
    \E{\left| \frac{1}{T}\int_0^T \lambda_t -\E{\lambda_t} dt \right|} \leq \frac{C}{\sqrt{T}}.
\end{equation}
These results quantify the law of large numbers for the integrated intensity. However, they do not describe its fluctuations at the central limit scale.
Their approach to prove \eqref{eq: maj_intro} relies on the Poisson imbedding representation of Hawkes processes together with a Malliavin's calculus adapted to the Poisson space, combined with Stein's method. We refer to \cite{bremaud_stability_1996} for the Poisson imbedding of point processes and to \cite{peccati_steins_2010} for Malliavin calculus on the Poisson space.

This Malliavin-Stein approach was further developed by \cite{coutin_normal_2026}, where new Wasserstein bounds were obtained for functionals of point processes. Their results apply, in particular, to nonlinear Hawkes processes and discrete linear Hawkes processes (see \cite{kirchner_hawkes_2016,quayle_etude_2022}). They also relaxed some of the assumptions required in the functional central limit theorem of \cite{zhu_nonlinear_2013}.

More recently, \cite{massat_limit_2026} obtained quantitative functional central limit theorem for nonlinear Hawkes processes in more general settings, including marked and compound Hawkes processes, using the Malliavin-Stein method. These results provide quantitative bounds at the functional level and thus go beyond the one-dimensional marginal estimates considered in the aforementioned works. \cite{massat_limit_2026} also relaxed the assumptions of \cite{zhu_nonlinear_2013}, thereby extending the corresponding functional central limit theorem for nonlinear Hawkes processes. 

Limit theorems for nearly unstable Hawkes processes have also been
established in \cite{jaisson_limit_2015,coutin_quantification_2025,liu_scaling_2025,horst_functional_2024}, with quantitative versions subsequently obtained in \cite{coutin_quantification_2025,horst_functional_2024}.

\subsection{Main contributions}
The main contributions of this paper can be summarized as follows.

\begin{itemize}
\item Based on the work of \cite{coutin_normal_2026} and by making use of Clark-Ocone formula, we establish a general upper bound between a Gaussian random variable and the integrated fluctuations of the intensity of a counting process:
$$\frac{1}{\sqrt{T}}\int_0^T
        \left(\lambda_t-\E{\lambda_t}\right)dt.$$
\item We apply this general result to the intensity of a subcritical linear Hawkes process. As a consequence, we prove a central limit theorem for the integrated fluctuations of the intensity showing that there exists $C>0$ and $\sigma^2>0$ such that for any $T>0$,
$$\Wcal_1\left(\frac{1}{\sqrt{T}}\int_0^T
        \lambda_t-\E{\lambda_t}dt, \, \Ncal(0,\sigma^2)\right) \leq \left(\frac{1}{\sqrt{T}}+ \int_0^T \psi(t) dt \right),$$
where $\psi$ is the resolvent associated with the underlying Volterra equation (see Theorem \ref{thm:appli_hawkes}).
To the best of our knowledge, this is the first central limit theorem for this quantity in the setting of linear Hawkes processes.

\end{itemize}

\section{Notations and preliminaries}
\label{sec: Notation}
To start with, let us introduce some notations. We denote by $\NN$ (resp. $\NN^*$) the set of non-negative (resp. positive) integers, that is $\NN := \{0, \, 1,\, 2,\dots \}$ (resp. $\NN^* := \{ 1,\, 2,\dots$ \}). We also define $\RR_+$ and $\RR_+^*$ as, respectively, $\RR_+ := [0, +\infty)$ and $\RR_+^* := (0, +\infty)$ the set of non-negative and positive real numbers. Moreover, for any $(a,b)\in \RR^2$, we may use the notation $[a,b]$ either for $a\leq b$ or for $a\geq b$. We also denote by $\Bcal(E)$ the Borel set of some topological space $E$.\\
Regarding function spaces, for any $A \in \Bcal(\RR)$ and $p \in \NN^*$, we define the Lebesgue space  
$$ \LL^p(A) = \left\{ f:A \to \RR \mid \|f\|_{\LL^p(A)} := \left(\int_A |f(t)|^p dt \right)^{1/p} < +\infty \right\}.$$  
For $p = \infty$, we set the set of a.e. bounded functions
$$ \LL^{\infty}(A) = \left\{ f:A \to \RR \mid \|f\|_{\LL^{\infty}(A)} := \esssup_{t\in A} |f(t)| < +\infty \right\}. $$  
In the specific case $A = \RR_+$, we use the shorthand notation $\LL^p := \LL^p(\RR_+)$ for any $p \in \NN^* \cup \{\infty\}$.

Throughout this paper, $C$ denotes a constant that may change from line to line and which is independent of $T$.\\

\subsection{Elements of stochastic analysis on the Poisson space}

In this section, we begin by recalling the basic ingredients of Malliavin's calculus that will serve as the foundation of our analysis. We first illustrate the main idea in the simple setting of a Poisson random variable. Indeed, Malliavin's calculus is closely related to integration-by-parts formulas associated with Gaussian and Poisson distributions. For $\Pcal$ a Poisson random variable with parameter $p>0$ and for any $f:\NN \to \RR$ such that $\Pcal f(\Pcal)$ and $f(\Pcal+1)$ are integrable, we have
$$\E{\Pcal f(\Pcal)} = p \E{f(\Pcal+1)}.$$
The term $f(\Pcal+1)$ reflects the effect of adding one point to the Poisson configuration. This motivates the introduction of the shift operator. We then introduce the Malliavin derivative 
$D$ in this context as 
$$D\left(f(\Pcal)\right) := f(\Pcal+1) - f(\Pcal).$$
The integration-by-part formula can then be written with the Malliavin derivative as
$$\E{\left(\Pcal-p\right) f(\Pcal)} = p \E{D\left(f(\Pcal)\right)}.$$
This formulation highlights the role of the centered Poisson random variable $(\Pcal-p) := \delta\left( \Pcal \right)$, which naturally appears in the integration-by-parts relation. We call it the divergence operator.\\\\
The same idea extends naturally to a Poisson random measure as proved by \cite{nualart_anticipative_1990}, \cite{peccati_steins_2010} or \cite{wu_construction_1987}.
In our context, the Poisson random measure will be described using two variables: $t$ for the jump times and an auxiliary parameter $\theta$, which plays a role in the imbedding representation used later.\\
We start by defining the space of configurations
$$ \Omega:=\left\{\omega=\sum_{i=1}^{n} \delta_{(t_{i},\theta_i)} \mid 0<t_1 < \cdots < t_n, \; (\theta_i)_{i=1,\dots,n} \in \left(\RR_+\right)^n, \; n\in \NN \cup\{+\infty\} \right\}.$$
Each realization of a counting process is represented as an element $\omega$ in $\Omega$ which is an $\NN$-valued measure on $\RR_+ \times \RR_+$. 

Let $\Fcal^N$ be the $\sigma$-field associated to the vague topology on $\Omega$, and $\PP$ be a probability measure under which the random measure $N$, defined as:
$$ N(B)(\omega):=\omega(B), \quad B \in \Bcal \left( \RR_+\times \RR_+\right),$$
is a random measure with intensity $1$ (so that $N(B)$ is a Poisson random variable with intensity $\pi(B)$ for any $B\in \Bcal \left( \RR_+ \times \RR_+ \right)$ and where $\pi$ denotes the Lebesgue measure). We set $\FF^N:=(\Fcal_t^N)_{t\in \RR_+}$ the natural history of $N$, that is 
$$\Fcal_t^N:=\sigma \left(N( \Tcal  \times \overline{\Tcal}), \; \Tcal \subset \Bcal((-\infty,t]), \; \overline{\Tcal} \in \Bcal(\RR_+) \right).$$
Let also, $\Fcal_\infty^N:=\lim_{t\to+\infty} \Fcal_t^N$. The expectation with respect to $\PP$ is denoted by $\E{\cdot}$. For $t\in \RR_+$, we denote by $\EE_t[\cdot]$ the conditional expectation $\E{\cdot \vert \Fcal_t^N}$.\\\\
\noindent
We then give some elements of stochastic analysis on the Poisson space, especially the shift operator, the Malliavin derivative and its dual operator: the divergence. 
 The following essentially comes from \cite{hillairet_malliavin-stein_2022} which provides some elements of Malliavin calculus for the Hawkes processes. Besides, note that their results do not change in our setting and that is why we decide not to provide the proofs here.

\begin{definition}[Shift operator]
\label{definition:shift}
    We define for $(t,\theta)$ in $\RR_+ \times \RR_+$ the measurable map $\eps_{(t,\theta)}^+ : \Omega \to \Omega $ where for any $A$ in $\Bcal(\RR_+ \times \RR_+)$
    $$(\eps_{(t,\theta)}^+(\omega))(A) := \left\lbrace \begin{array}{l} \omega(A \setminus \{(t,\theta)\}) +1, \quad \textrm{if } (t,\theta)\in A,\\\omega(A \setminus \{(t,\theta)\}) , \quad \textrm{else.}\end{array}\right. .$$
\end{definition}

\begin{lemma}
    \label{lemma:mesur}
    Let $t \in \RR_+$ and $F$ be an $\Fcal_t^N$-measurable random variable. Let $v > t$ and $\theta\geq 0$. It holds that 
    $$ F\circ\eps_{(v,\theta)}^+ = F, \quad \PP-a.s. $$
\end{lemma}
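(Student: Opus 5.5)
The plan is the standard Doob–Dynkin factorization argument, followed by a null-set check. Since $F$ is $\Fcal_t^N$-measurable and $\Fcal_t^N$ is generated by the counts $N(\Tcal\times\overline{\Tcal})$ with $\Tcal\subset(-\infty,t]$, I will first represent $F$ as a measurable function of the restriction of the configuration to $[0,t]\times\RR_+$. Write $\omega_t := \omega(\cdot\cap([0,t]\times\RR_+))$ for this restricted configuration, and note that $\omega\mapsto\omega_t$ is measurable for the vague $\sigma$-field. By the Doob–Dynkin lemma there is then a measurable $f:\Omega\to\RR$ with $F=f(\omega_t)$ $\PP$-a.s.

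To justify this representation cleanly, I will use a monotone class argument. Let $\mathcal{G}_t$ be the class of events $A\in\Fcal_t^N$ such that $\1_A(\omega)=\1_A(\omega_t)$ for every $\omega$. Each generator $\{N(\Tcal\times\overline{\Tcal})\in B\}$ belongs to $\mathcal{G}_t$, because $N(\Tcal\times\overline{\Tcal})(\omega)$ depends only on $\omega_t$ when $\Tcal\subset(-\infty,t]$. The class $\mathcal{G}_t$ is stable under complements and countable unions, so it equals $\Fcal_t^N$. Passing to simple functions and then to limits gives $F(\omega)=F(\omega_t)$ for every $\omega$, not merely almost surely, as long as we work with a genuinely $\Fcal_t^N$-measurable version of $F$.

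Next I will observe that for $v>t$ the shift only adds the atom $(v,\theta)$, which lies outside $[0,t]\times\RR_+$. Hence $(\eps^+_{(v,\theta)}(\omega))_t=\omega_t$, and therefore
$$F\circ\eps^+_{(v,\theta)}(\omega)=F\big((\eps^+_{(v,\theta)}(\omega))_t\big)=F(\omega_t)=F(\omega).$$

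The main obstacle is the ``$\PP$-a.s.'' qualifier. If $F$ is only defined up to a null set, i.e.\ it equals an $\Fcal_t^N$-measurable version $\tilde F$ outside a $\PP$-null set $\Ncal$, I must check that $\{\omega:\eps^+_{(v,\theta)}(\omega)\in\Ncal\}$ is also null. This is not automatic, because for fixed $(v,\theta)$ the image measure of $\PP$ under the shift is singular with respect to $\PP$: it almost surely carries an atom at the deterministic point $(v,\theta)$. I will handle this in two ways.
\begin{itemize}
\item If the statement is read with $F$ a fixed $\Fcal_t^N$-measurable map, the identity holds pointwise and nothing more is needed.
\item Otherwise, I interpret the composition as in \cite{hillairet_malliavin-stein_2022}, namely $\PP\otimes dv\,d\theta$-a.e. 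The Mecke formula gives $\E{\int \1_{\Ncal}(\eps^+_{(v,\theta)}\omega)\,g(v,\theta)\,dv\,d\theta}=\E{\int g\,N(dv,d\theta)\1_{\Ncal}}=0$, so the shifted null set remains negligible.
\end{itemize}
With either reading, the conclusion $F\circ\eps^+_{(v,\theta)}=F$ $\PP$-a.s.\ follows.
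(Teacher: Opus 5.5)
Your proof is correct and follows the standard argument: the paper gives no proof of its own and defers to \cite{hillairet_malliavin-stein_2022}, where the identity is checked on functionals of finitely many counts $N(\Tcal\times\overline{\Tcal})$ with $\Tcal\subset(-\infty,t]$, which are unchanged by adding an atom at time $v>t$, and is then extended to all $\Fcal_t^N$-measurable $F$ by a monotone class argument — the same thing as your factorization through $\omega\mapsto\omega_t$. Your further remark is also correct and goes beyond what the paper says: the image of $\PP$ under $\eps^+_{(v,\theta)}$ is singular with respect to $\PP$, so the statement holds pointwise for a genuinely $\Fcal_t^N$-measurable version of $F$, and otherwise only for $dv\,d\theta$-a.e.\ $(v,\theta)$ via the Mecke formula.
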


\begin{definition}[Malliavin derivative]
    For $F$ in $\LL^2(\Omega,\Fcal_\infty^N,\PP)$, we define $D F$ the Malliavin derivative of $F$ as 
    $$ D_{(t,\theta)} F := F\circ \eps_{(t,\theta)}^+ - F, \quad (t,\theta) \in \RR_+^2.  $$
\end{definition}

\noindent The following definition is a by-product of \cite[Theorem 1]{picard_formules_1996} (see also \cite{nualart_anticipative_1990}).

\begin{definition}
    Let $\Ical$  be the sub-sigma field of $\Bcal(\RR_+^2)\otimes \Fcal^N$ of stochastic processes $Z:=(Z_{(t,\theta)})_{(t,\theta) \in \RR_+ \times \RR_+}$ in $L^1(\Omega \times \RR_+ \times \RR_+,\PP\otimes \pi)$ such that 
    $$ D_{(t,\theta)} Z_{(t,\theta)} = 0, \quad \textrm{ for a.s. } (t,\theta) \in \RR_+ \times \RR_+.$$
\end{definition}

\begin{remark}
\label{rem:Nshift}
    Let $(t_0,\theta_0)$ in $\RR \times \RR_+$, $(s,t)$ in $\RR_+ \times \RR_+$ with $t_0 < s < t$.\\
    For $\mathcal T \in \{(s,t), (s,t], [s,t), [s,t]\}$ and $\overline{\Tcal}$ in $\Bcal(\RR_+)$,  we have: 
    $$ N\circ \eps_{(t_0,\theta_0)}^+ (\Tcal \times \overline{\Tcal}) = N (\Tcal \times \overline{\Tcal}). $$
\end{remark}

\begin{definition}
\label{def: div op}
    We set $\Scal$ the set of stochastic processes $Z:=(Z_{(t,\theta)})_{(t,\theta) \in \RR\times \RR_+}$ in $\Ical$ such that $Z$ is predictable according to the natural history and: 
    $$ \E{\iint_{\RR\times \RR_+} \left|Z_{(t,\theta)}\right|^2 dt d\theta} + \E{\left(\iint_{\RR\times \RR_+} Z_{(t,\theta)} dt d\theta \right)^2}<+\infty .$$
    For $Z$ in $\Scal$, we set the divergence operator with respect to $N$ as  
    \begin{equation}
    \label{eq:delta}
        \delta(Z):=\iint_{\RR\times \RR_+} Z_{(t,\theta)} N(dt,d\theta) - \iint_{\RR\times \RR_+} Z_{(t,\theta)} dt d\theta.
    \end{equation}
\end{definition}

\section{Main result}
\label{sec: gen estimate}

In this section, we provide a general upper bound of the $1$-Wasserstein distance between a Gaussian random variable and a random variable 
$$\frac{\int_0^T \lambda_s - \E{\lambda_s} ds}{\sqrt{T}},$$
where $\lambda$ stands for a positive $\FF^N$-predictable process.

\begin{theorem}
\label{thm: main}
    Let $\lambda$ be a positive $\FF^N$-predictable process such that $\sup_{x\in \RR_+} \E{\lambda_x} < +\infty$. In addition, assume that there exists a deterministic function $f: \RR_+ \to \RR_+$ satisfying 
    $$f\in \LL^1(\RR_+)$$
    and such that, for every $(u,s,\rho)\in \RR_+^3$ with $u<s$, 
    $$\1_{\rho \leq \lambda_u}\EE_{u^-} \left[ D_{(u,\rho)}(\lambda_s)\right] =\1_{\rho \leq \lambda_u} f(s-u).$$
    Then, for any $\sigma>0$, there exists $C>0$, such that for any $T>0$
    \begin{equation}
        \Wcal_1\left(\frac{\int_0^T \lambda_s - \E{\lambda_s} ds}{\sqrt{T}}, \Ncal(0,\sigma^2)\right) \leq \E{  \left|\sigma^2- \frac{1}{T}\int_{0}^T \lambda_u \left(\int_u^T f(s-u) ds \right)^2 du  \right|} + \frac{C}{\sqrt{T}}.
    \end{equation}
\end{theorem}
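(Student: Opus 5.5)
We would proceed by Stein's method combined with a Clark–Ocone representation on the Poisson space. Set $F_T := T^{-1/2}\int_0^T (\lambda_s-\E{\lambda_s})\,ds$. The first step is to write $F_T$ as a divergence. By the Clark–Ocone formula on the Poisson space, valid for $F\in\LL^2(\Omega,\Fcal^N_\infty,\PP)$ (square integrability of $F_T$ has to be checked, or obtained by truncation and a limiting argument),
$$F_T = \delta(Z), \qquad Z_{(u,\rho)} := \EE_{u^-}\left[D_{(u,\rho)} F_T\right],$$
with $Z$ predictable, so $Z\in\Scal$. Since $\lambda_s$ is $\Fcal^N_{s^-}$-measurable, Lemma \ref{lemma:mesur} gives $D_{(u,\rho)}\lambda_s=0$ for $u\geq s$. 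Hence
$$Z_{(u,\rho)} = \frac{1}{\sqrt T}\int_u^T \EE_{u^-}\left[D_{(u,\rho)}\lambda_s\right] ds .$$
On $\{\rho\leq\lambda_u\}$ the hypothesis turns this into the deterministic quantity $g_T(u):=T^{-1/2}\int_u^T f(s-u)\,ds$. On $\{\rho>\lambda_u\}$ nothing is assumed. The natural reading, coming from the thinning or imbedding structure, is that adding a point above the intensity curve does not affect $\lambda$, so $D_{(u,\rho)}\lambda_s=0$ there. I would make this explicit, or restrict to the integrand $\1_{\rho\le\lambda_u}$, and so obtain
$$Z_{(u,\rho)} = \1_{\rho\leq \lambda_u}\, g_T(u).$$

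The second step is Stein's method. For $h$ $1$-Lipschitz, let $\varphi=\varphi_h$ be the solution of the Stein equation for $\Ncal(0,\sigma^2)$, so that $\|\varphi'\|_\infty\le C$ and $\|\varphi''\|_\infty\le C$. Using duality $\E{\delta(Z)\varphi(F_T)}=\E{\iint Z_{(u,\rho)}D_{(u,\rho)}\varphi(F_T)\,du\,d\rho}$, we have to control $\E{\sigma^2\varphi'(F_T)-F_T\varphi(F_T)}$. Taylor-expand $D_{(u,\rho)}\varphi(F_T)=\varphi'(F_T)D_{(u,\rho)}F_T+R$ with $|R|\le C|D_{(u,\rho)}F_T|^2$. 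Then project the first-order term onto $\Fcal_{u^-}$. More precisely, replace $D F_T$ by its predictable projection $Z$ and control the difference $D F_T-Z$ through the martingale structure; this is the approach of \cite{coutin_normal_2026}. Integrating out $\rho$ turns $\iint Z_{(u,\rho)}^2\,du\,d\rho$ into
$$\frac1T\int_0^T\lambda_u\left(\int_u^Tf(s-u)\,ds\right)^2du ,$$
which yields the variance term in the statement.

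The third step bounds the remainders by $C/\sqrt T$. The second-order term is
$$\E{\iint |Z|\,|DF_T|^2\,du\,d\rho}\le \|g_T\|_\infty\,\E{\int_0^T\lambda_u\,\EE_{u^-}[\ldots]\,du}.$$
Here $\|g_T\|_\infty\le \|f\|_{\LL^1}/\sqrt T$, and, conditionally, $|DF_T|$ is of order $T^{-1/2}$ in the sense that $\EE_{u^-}[D_{(u,\rho)}F_T]=g_T(u)$ when $\rho\le\lambda_u$. Since $D\lambda_s\ge0$ in the self-exciting setting, expectations of $DF_T$ reduce to $g_T$. Together with $\sup_x\E{\lambda_x}<\infty$, this yields a bound $C\,T\cdot T^{-3/2}=C/\sqrt T$.

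The main obstacle is the replacement of $\varphi'(F_T)D_{(u,\rho)}F_T$ by $\varphi'(F_T)Z_{(u,\rho)}$. In general $DF_T$ is not predictable, and $\varphi'(F_T)$ is not $\Fcal_{u^-}$-measurable. I would first try the route of \cite{coutin_normal_2026}: write $\varphi'(F_T)=\varphi'(\EE_{u^-}F_T)+(\varphi'(F_T)-\varphi'(\EE_{u^-}F_T))$. The first piece then factorizes under $\EE_{u^-}$ and gives exactly $Z$. The error is bounded by $\|\varphi''\|_\infty\,\E{|F_T-\EE_{u^-}F_T|\,|D F_T|}$. If this Lipschitz-type estimate does not give $C/\sqrt T$ directly, a more careful second-moment bound on $DF_T$ will be needed, which would require an extra assumption beyond the one on its conditional mean.
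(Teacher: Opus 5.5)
Your first step is the same as the paper's: Clark--Ocone, identification of the predictable projection with $\EE_{u^-}[D_{(u,\rho)}\lambda_s]$, Fubini, and restriction to $\{\rho\le\lambda_u\}$. The paper also uses this restriction tacitly when it writes $F^T$ as a divergence with integrand $\1_{\{\rho\le\lambda_u\}}Z^T$.

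The gap is in the Stein step. Taylor-expanding $D_{(u,\rho)}\varphi(F_T)$ produces terms in the full, non-adapted derivative $D_{(u,\rho)}F_T$. The hypothesis only controls its conditional mean on $\{\rho\le\lambda_u\}$, which does not bound $\EE_{u^-}\bigl[|D_{(u,\rho)}F_T|^2\bigr]$. Also, $D\lambda_s\ge 0$ is not assumed. So your Step 3 estimate of $\E{\iint |Z|\,|DF_T|^2\,du\,d\rho}$ is unjustified.

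Your proposed fix for the first-order term also fails quantitatively. For $u\le T/2$ the fluctuation $F_T-\EE_{u^-}F_T$ is not small. Its variance is $\frac1T\int_u^T\E{\lambda_s}\bigl(\int_s^T f(r-s)\,dr\bigr)^2ds$, which is of order one whenever the limit is nondegenerate. Integrating $\1_{\{\rho\le\lambda_u\}}\,g_T(u)\,\|\varphi''\|_\infty\,\E{|F_T-\EE_{u^-}F_T|\,|D_{(u,\rho)}F_T|}$ over $[0,T]\times\RR_+$ therefore gives a bound of order $T\cdot T^{-1/2}\cdot T^{-1/2}=1$, not $T^{-1/2}$. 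The same loss occurs when you put $\varphi'(F_T)$ back in place of $\varphi'(\EE_{u^-}F_T)$ in the variance term. As you suspected, this route cannot close under the stated hypothesis.

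The missing idea is to avoid $DF_T$ altogether. The paper invokes Theorem 3.2.7 of \cite{coutin_normal_2026}, a Wasserstein bound for divergences $\delta(\zeta Z)$ with $\zeta_{(u,\rho)}=\1_{\{\rho\le\lambda_u\}}$ and $Z$ predictable. Its error terms are:
\begin{itemize}
\item the variance term $\E{\bigl|\sigma^2-\iint\zeta|Z|^2\,du\,d\rho\bigr|}$;
\item the cubic term $\E{\iint\zeta|Z|^3\,du\,d\rho}$;
\item three terms built from $\delta(\zeta\,D_{(u,\rho)}Z)$.
\end{itemize}
These involve only the Clark--Ocone integrand (the jumps of the martingale $\EE_t[F_T]$) and its derivative, never the non-adapted $DF_T$.

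Under the hypothesis, $Z^T_{(u,\rho)}=T^{-1/2}\int_u^T f(s-u)\,ds$ is deterministic. Hence $DZ^T=0$ and the three $\delta$-terms vanish. The variance term is exactly the one in the statement. The cubic term is at most $T^{-3/2}\cdot T\cdot\|f\|_{\LL^1}^3\sup_x\E{\lambda_x}=C/\sqrt{T}$. This is why knowing only the conditional mean of the derivative suffices. Without a bound of this type, or an equivalent martingale-CLT argument whose jumps are $Z^T$, your proposal does not reach the stated estimate.
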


\begin{proof}
    For $T>0$, we set 
    \begin{equation}
        F^T := \frac{\int_0^T \lambda_s - \E{\lambda_s} ds}{\sqrt{T}}.
    \end{equation}
    By the Clark-Ocone formula (see for instance \cite{zhang_clarkocone_2009}), for any $s\in [0,T]$,
    $$\lambda_s - \E{\lambda_s} = \int_{\RR_+^2} \left[D_{(u,\rho)}\left( \lambda_s\right) \right]^\mathrsfs{P} \left( N(du,d\rho) - du d\rho \right) $$
    where $\left[D_{(u,\rho)}\left( \lambda_s\right) \right]^\mathrsfs{P}$ is the predictable projection of the Malliavin derivative of $\lambda_s$. Moreover, for fixed $(u,\rho)\in \RR_+^2$, this predictable projection is almost surely equal to the conditional expectation of the Malliavin derivative of $\lambda_s$ when it is conditioned by $\Fcal^N_{u^-}$ (see Remark 3.5 of \cite{zhang_clarkocone_2009}):
    $$\left[ D_{(u,\rho)}\left(\lambda_s \right)\right]^\mathrsfs{P} = \EE_{u^-} \left[D_{(u,\rho)}\left(\lambda_s \right)\right], \quad a.s.$$
    Hence, we get
    \begin{align*}
        F^T &= \frac{1}{\sqrt{T}} \int_0^T \left(\int_{\RR_+^2} \EE_{u^-} \left[D_{(u,\rho)}\left(\lambda_s \right)\right] \left( N(du,d\rho) - du d\rho \right) \right) ds \qquad a.s. \\
        &= \frac{1}{\sqrt{T}} \int_{[0,T]\times \RR_+}  \left(\int_u^T \EE_{u^-} \left[D_{(u,\rho)}\left(\lambda_s \right)\right]ds \right)\left( N(du,d\rho) - du d\rho \right) \qquad a.s.
    \end{align*}
    Now, by setting for any $(u,\rho)\in \RR^2_+$
    $$\z_{(u,\rho)} = \1_{\rho \leq \lambda_u} \quad \text{and} \quad Z^T_{(u,\rho)} = \frac{1}{\sqrt{T}} \int_u^T \EE_{u^-}\left[ D_{(u,\rho)}\left(\lambda_s \right)\right] ds,$$
    we may write $F^T$ as a divergence term:
    $$F^T = \delta \left( \z Z^T \right) \quad a.s.$$
    The rest of the proof is widely inspired by the proof of Theorem 3.2.7 of \cite{coutin_normal_2026} and we refer to this paper for more details. In particular, this theorem provides an upper bound of the $1$-Wasserstein distance:
    \begin{align*}
            \Wcal_1\left(F^T, \Ncal(0,\sigma^2)\right) &\leq \E{  \left|\sigma^2- \int_{\RR_+^2} \z_{(u,\rho)} \left|Z^T_{(u,\rho)}\right|^2 du d\rho \right|} + \E{\int_{\RR_+^2}\z_{(u,\rho)}\left|Z^T_{(u,\rho)} \right|^3 dud\rho}\\
            & \quad +\E{ \iint_{\RR_+^2} \z_{(u,\rho)} Z^T_{(u,\rho)} \left|\delta \left( \z D_{(u,\rho)}\left(Z^T \right)\right) \right| dud\rho} \\
            & \quad +\E{ \iint_{\RR_+^2} \z_{(u,\rho)} Z^T_{(u,\rho)} \left|\delta \left( \z D_{(u,\rho)}\left(Z^T \right)\right) \right|^2 dud\rho}\\
            & \quad + 2\E{ \iint_{\RR_+^2} \z_{(u,\rho)} \left|Z^T_{(u,\rho)} \right|^2 \left|\delta \left( \z D_{(u,\rho)}\left(Z^T \right)\right) \right|dud\rho}.
        \end{align*}
    Note that, in the setting of the theorem, $Z^T$ does not depend on the parameter $\rho$ and we have
    $$Z^T_{(u,\rho)} = \frac{1}{\sqrt{T}}\int_u^T f(s-u) ds .$$
    Then the Malliavin derivative of $Z^T$ is equal to $0$ which provides
    \begin{align*}
            \Wcal_1\left(F^T, \Ncal(0,\sigma^2)\right) \leq & \E{  \left|\sigma^2- \int_{\RR_+^2} \z_{(u,\rho)}\left|Z^T_{(u,\rho)}\right|^2 du d\rho \right|} + \E{\int_{\RR_+^2}\z_{(u,\rho)}\left[Z^T_{(u,\rho)} \right|^3 dud\rho}
    \end{align*}
    Note that
    \begin{align*}
        \E{\int_{\RR_+^2}\z_{(u,\rho)}\left[Z^T_{(u,\rho)} \right|^3 dud\rho}=& \frac{1}{T^{3/2}}\E{\int_0^T \lambda_u \left(\int_u^T \EE_{u^-}\left[ D_{(u,0)}(\lambda_s)\right] ds \right)^3 du}.
    \end{align*}
    Hence, the assumptions of the theorem yields
    \begin{align*}
        T_2 \leq \frac{1}{T^{3/2}}\E{\int_0^T \lambda_u \left(\int_u^T f(s-u) ds \right)^3 du} \leq \frac{\left\| f\right\|_{\LL^1}}{\sqrt{T}}\sup_{x\geq 0}\E{\lambda_x}
    \end{align*}
    Therefore, there exists $C>0$ such that for every $T>0$,
    \begin{align*}
            \Wcal_1\left(F^T, \Ncal(0,\sigma^2)\right) \leq & \E{  \left|\sigma^2- \frac{1}{T}\int_{0}^T \lambda_u \left(\int_u^T f(s-u) ds \right)^2 du  \right|} + \frac{C}{\sqrt{T}}
    \end{align*}
\end{proof}

\section{Application to Subcritical Linear Hawkes process}
We apply Theorem \ref{thm: main} to the intensity of Hawkes processes. We first recall the definition of a Hawkes process and introduce the associated intensity function. We then use the general result to derive a second theorem, providing a convergence result and an explicit upper bound on the convergence rate.

\begin{definition}[linear Hawkes process]
\label{def:Hawkes}
Let $(\Omega, \FF, \PP)$ be a general probability space.
Let $\mu\in \RR_+$ and $\phi:\RR_+ \to \RR_+$. A linear Hawkes process $H:=(H_t)_{t \geq 0}$ with baseline $\mu$ and kernel $\phi$ is a counting process such that  
\begin{itemize}
\item[(i)] $H_0=0,\quad \PP-a.s.$,
\item[(ii)] its ($\FF$-predictable) intensity process is given by
$$\lambda_t:= \mu + \int_{(0,t)} \phi(t-s) dH_s, \quad t>0,$$
that is for any $0\leq s \leq t $ and $A \in \Fcal_s$, $ \E{\1_A (H_t-H_s)} = \E{\int_{(s,t]} \1_A \lambda_r dr }$.
\end{itemize}
\end{definition}

\begin{remark}[Stationary Hawkes process]
    Another important version of an Hawkes process is the stationary one. In the following, we will write $\lambda^\infty$ the intensity associated to the stationary Hawkes process. We refer to \cite{bremaud_stability_1996} for more details on stability and stationarity of Hawkes processes.
\end{remark}

\begin{theorem}
\label{thm:appli_hawkes}
   Let $H$ be a linear Hawkes process with baseline intensity $\mu>0$ and kernel $\phi \in \LL^1$. We denote by $\lambda$ its associated intensity. Assume that
    $$\|\phi\|_{\LL^1} < 1 \quad \text{and} \quad \int_0^\infty t \phi(t) dt < +\infty.$$
    Then, there exist $C>0$ and $\sigma>0$ such that for any $T>0$,
    \begin{align*}
            \Wcal_1\left(\frac{\int_0^T \lambda_s - \E{\lambda_s} ds}{\sqrt{T}} ,\, \Ncal(0,\sigma^2)\right) \leq & C\left( \frac{1}{\sqrt{T}} + \int_T^\infty \psi(s)ds \right).
    \end{align*}
    Here, $\psi \in \LL^1$ stands for the resolvent associated with the Volterra equation with kernel $\phi$ i.e. for any $x\in \RR_+$, $\psi(x) = \phi(x) +\psi* \phi(x) = \phi(x) + \int_0^x \phi(y)\psi(x-y) dy .$
\end{theorem}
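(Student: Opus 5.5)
We will apply Theorem \ref{thm: main} with $f=\psi$, the resolvent of $\phi$, and $\sigma^2 := \mu\|\psi\|_{\LL^1}^2/(1-\|\phi\|_{\LL^1})$ (up to the exact constant from the computation below). The first step is to realise the Hawkes process on the Poisson space via the Poisson imbedding. We set $H_t=\int_{(0,t]\times\RR_+}\1_{\rho\le\lambda_s}N(ds,d\rho)$ and $\lambda_t=\mu+\int_{(0,t)\times\RR_+}\phi(t-s)\1_{\rho\le\lambda_s}N(ds,d\rho)$, so that $\lambda$ is positive and $\FF^N$-predictable. The subcriticality $\|\phi\|_{\LL^1}<1$ gives $\E{\lambda_t}=\mu(1+\int_0^t\psi)\le \mu/(1-\|\phi\|_{\LL^1})$, hence $\sup_x\E{\lambda_x}<\infty$. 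It also gives $\psi\ge 0$ and $\|\psi\|_{\LL^1}=\|\phi\|_{\LL^1}/(1-\|\phi\|_{\LL^1})$, so $\psi\in\LL^1$.

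The key structural step is the identity $\1_{\rho\le\lambda_u}\EE_{u^-}[D_{(u,\rho)}\lambda_s]=\1_{\rho\le\lambda_u}\psi(s-u)$ for $u<s$. On $\{\rho\le\lambda_u\}$, adding the point $(u,\rho)$ adds an accepted jump at time $u$. By Lemma \ref{lemma:mesur} the past before $u$ is unchanged, and linearity gives
$$D_{(u,\rho)}\lambda_s=\phi(s-u)+\int_{(u,s)\times\RR_+}\phi(s-r)\big(\1_{\rho'\le\lambda_r\circ\eps^+_{(u,\rho)}}-\1_{\rho'\le\lambda_r}\big)N(dr,d\rho').$$
Since $D\lambda\ge0$ by monotonicity of the cluster construction, the difference of indicators is $\1_{\lambda_r<\rho'\le\lambda_r+D_{(u,\rho)}\lambda_r}$. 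Taking $\EE_{u^-}$ and using predictability to compensate $N$ by $dr\,d\rho'$, we get that $g(s):=\EE_{u^-}[D_{(u,\rho)}\lambda_s]$ satisfies $g(s)=\phi(s-u)+\int_u^s\phi(s-r)g(r)\,dr$. This is the Volterra equation, so uniqueness in $\LL^1_{loc}$ yields $g(s)=\psi(s-u)$. We will justify the conditional compensation by a tower argument, using $\EE_{u^-}[\EE_{r^-}[\cdot]]$ and the nonnegativity needed for Fubini.

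Theorem \ref{thm: main} then reduces the statement to bounding
$$\E{\Big|\sigma^2-\frac1T\int_0^T\lambda_u\Psi(T-u)^2du\Big|},\qquad \Psi(x):=\int_0^x\psi.$$
We split this as $|\sigma^2-\frac1T\int_0^T\E{\lambda_u}\Psi(T-u)^2du|+\frac1T\E{|\int_0^T(\lambda_u-\E{\lambda_u})\Psi(T-u)^2du|}$.

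For the deterministic part, we write $\E{\lambda_u}=\bar\lambda-\mu\int_u^\infty\psi$ with $\bar\lambda=\mu/(1-\|\phi\|_{\LL^1})$, and $\Psi(T-u)^2=\|\psi\|^2_{\LL^1}-O(\int_{T-u}^\infty\psi)$. The errors are then $\frac1T\int_0^T\int_x^\infty\psi\,dx\le\frac1T\int_0^\infty t\psi(t)dt$. This is finite because $\int t\phi<\infty$ and $\|\phi\|<1$ imply $\int t\psi<\infty$, which we obtain by taking the first moment of the convolution series $\psi=\sum_{n\ge1}\phi^{*n}$. The bound is $O(1/T)\le O(1/\sqrt T)$. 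The $\int_T^\infty\psi$ term in the statement absorbs any residual boundary contributions if the bookkeeping is done more crudely.

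For the random part, we write $\Psi(T-u)^2=\|\psi\|^2-r_T(u)$ with $0\le r_T\le 2\|\psi\|\int_{T-u}^\infty\psi$. The term with $r_T$ is bounded in $L^1$ by $\frac{C}{T}\int_0^T\int_{x}^\infty\psi\,dx\cdot\sup\E{\lambda}=O(1/T)$. The main term is $\frac{\|\psi\|^2}{T}\E{|\int_0^T(\lambda_u-\E{\lambda_u})du|}$. We expect this to be the main obstacle: it is the law of large numbers for the integrated intensity, and we need it at rate $C/\sqrt T$, as recalled in the introduction from \cite{hillairet_malliavin-stein_2022}. Proving it directly, we bound it by an $L^2$ norm. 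The Clark–Ocone representation above gives $\int_0^T(\lambda_u-\E{\lambda_u})du=\int\1_{\rho\le\lambda_u}\Psi(T-u)(N-dud\rho)$. The isometry then yields variance $\int_0^T\E{\lambda_u}\Psi(T-u)^2du\le CT$, so this term is $O(T^{-1/2})$, which concludes the proof.
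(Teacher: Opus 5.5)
Your proposal is correct. It follows the paper's skeleton: Theorem~\ref{thm: main} with $f=\psi$ and $\sigma^2=\mu\|\psi\|_{\LL^1}^2/(1-\|\phi\|_{\LL^1})$, which is exactly the paper's $\E{\lambda^\infty_0}\|\psi\|_{\LL^1}^2$. Where you differ is in how you control the remaining term.

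\textbf{The paper's decomposition.} The paper centres everything on the stationary intensity $\lambda^\infty$. It splits $R^T$ into three pieces:
\begin{itemize}
\item a law-of-large-numbers term for $\lambda^\infty$, imported from \cite{coutin_normal_2026};
\item a coupling term $\E{\left|\int_0^T(\lambda^\infty_u-\lambda_u)\,du\right|}$, also imported from \cite{coutin_normal_2026};
\item a boundary term from replacing $\Psi(T-u)^2$ by $\|\psi\|_{\LL^1}^2$, with your notation $\Psi(x)=\int_0^x\psi$. This is where $\int_T^\infty\psi$ comes from.
\end{itemize}
It also cites the identity $\1_{\rho\le\lambda_u}\EE_{u^-}[D_{(u,\rho)}\lambda_s]=\1_{\rho\le\lambda_u}\psi(s-u)$ rather than deriving it.

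\textbf{Your decomposition.} You separate mean and fluctuation instead.
\begin{itemize}
\item The explicit formula $\E{\lambda_u}=\mu(1+\Psi(u))$ turns the deterministic part into an $O\bigl(T^{-1}\int_0^\infty t\psi(t)\,dt\bigr)$ error.
\item The fluctuation part is handled by the $L^2$ isometry for $\int_0^T(\lambda_u-\E{\lambda_u})\,du=\int\1_{\rho\le\lambda_u}\Psi(T-u)(N-du\,d\rho)$. This gives the $T^{-1/2}$ rate directly for the non-stationary process.
\end{itemize}

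\textbf{What each route buys.} Your argument is self-contained: it needs no stationary version and no coupling estimate. Your first-moment bound on $\psi$ via $\sum_n\phi^{*n}$ is cleaner than the paper's bootstrap, and gives the same constant $(1-\|\phi\|_{\LL^1})^{-2}\int_0^\infty t\phi(t)\,dt$. It also shows that the $\int_T^\infty\psi$ term is superfluous, since $\int_T^\infty\psi\le T^{-1}\int_0^\infty t\psi(t)\,dt$. So the bound is really $C/\sqrt T$; the paper's estimate implies this too, but the paper does not say so. The paper's route is shorter on the page. Its comparison-with-stationarity structure is also the one that carries over when the mean intensity is not explicit.

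\textbf{One thing to tidy up.} Your Volterra-uniqueness step presupposes that $s\mapsto\EE_{u^-}[D_{(u,\rho)}\lambda_s]$ is locally integrable, which you do not show. A fix that avoids uniqueness altogether is to expand $D_{(u,\rho)}\lambda$ generation by generation in the thinning construction. By Tonelli, the $k$-th generation of descendants of the added point contributes an intensity with $\EE_{u^-}$-mean $\phi^{*(k+1)}(\cdot-u)$. Monotone convergence then gives $\psi(\cdot-u)$ directly. Separately, writing the Clark--Ocone integrand as $\1_{\rho\le\lambda_u}\Psi(T-u)$ uses $D_{(u,\rho)}\lambda_s=0$ on $\{\rho>\lambda_u\}$. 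This is immediate from the construction but worth stating.
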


\begin{proof}
    Note that $\lambda$ respects the assumptions of Theorem \ref{thm: main}. Indeed, we can refer to \cite{coutin_normal_2026} for the proof of the different points (see Corollary A.2.6 and Lemma A.2.8). In particular, in the setting of Hawkes processes, for every $(u,s,\rho)\in \RR_+^3$ with $u<s$, it holds
    $$\1_{\rho \leq \lambda_u}\EE_{u^-} \left[ D_{(u,\rho)}(\lambda_s)\right] =\1_{\rho \leq \lambda_u} \psi(s-u),$$
    with $\psi$ the resolvent associated with the Volterra equation with kernel $\phi$. Besides, for any $x\in \RR_+$,
    \begin{equation}
        \psi(x) = \phi(x) +\psi* \phi(x) = \phi(x) + \int_0^x \phi(y)\psi(x-y) dy.
    \end{equation}
    which, under the assumption $\left\|\phi \right\|_{\LL^1}<1$, yields $\psi \in \LL^1$ and
    \begin{equation}
        \left\|\psi \right\|_{\LL^1} = \frac{\left\|\phi \right\|_{\LL^1}}{1-\left\|\phi \right\|_{\LL^1}}.
    \end{equation}
    We now can apply Theorem \ref{thm: main} and therefore for any $\sigma>0$, there exists $C>0$ such that for any $T>0$, 
    \begin{align}
    \label{eq:maj_dist_wassersteint}
            \Wcal_1\left(\frac{\int_0^T \lambda_s - \E{\lambda_s} ds}{\sqrt{T}}, \Ncal(0,\sigma^2)\right) \leq & \E{  \left|\sigma^2- \int_{0}^T \lambda_u \left(\int_u^T \psi(s-u) ds \right)^2 du  \right|} + \frac{C}{\sqrt{T}}.
    \end{align}
    For the rest of the proof, we take $\sigma^2 = \E{\lambda^\infty_0} \left\|\psi \right\|_{\LL^1}^2$ where $\lambda^\infty$ is the stationary version of $\lambda$ and we denote by $R^T$ the non-trivial term of the right hand side of the inequality:
    $$R^T := \E{\left|\E{\lambda^\infty_0} \left\|\psi \right\|_{\LL^1}^2- \frac{1}{T}\int_{0}^T \lambda_u \left(\int_u^T \psi(s-u) ds \right)^2 du  \right|}.$$
    Then, by the triangle inequality,
    \begin{align}
        R^T \leq & \frac{\left\| \psi \right\|_{\LL^1}^2}{T}\E{\left|\int_0^T \E{\lambda^\infty_0} - \lambda^\infty_u du \right|} + \frac{\left\| \psi \right\|_{\LL^1}^2}{T}\E{\left|\int_0^T \lambda^\infty_u - \lambda_u du \right|} \nonumber\\
        &+ \frac{1}{T} \E{\left|\int_0^T \lambda_u \left(\left\|\psi \right\|_{\LL^1}^2 -\left|\int_u^T \psi(s-u) ds\right|^2 \right) du\right|}
        \label{eq: R_T}
    \end{align}
    Note that the control of the two first terms of the right hand side of the inequality has already been done in \cite{coutin_normal_2026}. More precisely, these terms correspond respectively to $T_{1,1}$ and $T_{1,2}$ in the proof of Theorem 4.3.18 \cite{coutin_normal_2026} and they are bounded as follows:
    \begin{align}
        \frac{\left\| \psi \right\|_{\LL^1}^2}{T}\E{\left|\int_0^T \E{\lambda^\infty_0} - \lambda^\infty_u du \right|} &\leq \frac{\left\| \psi \right\|_{\LL^1}^3 \sqrt{2\E{\lambda^\infty_0}}}{\sqrt{T}},\label{eq:maj_L1_fluctuation_int_stat}\\
        \text{and} \quad \frac{\left\| \psi \right\|_{\LL^1}^2}{T}\E{\left|\int_0^T \lambda^\infty_u - \lambda_u du \right|} &\leq \left(\frac{\left\| \psi \right\|_{\LL^1}^2\E{\lambda^\infty_0}}{1- \left\|\phi \right\|_{\LL^1}} \int_0^\infty t\phi(t) dt \right)\frac{1}{T} .
        \label{eq:maj_L1_int_stat}
    \end{align}
Regarding the last right hand term of \eqref{eq: R_T}, we can use the fact that $a^2-b^2 \leq 2 \max(a,b)\left|a-b\right|$ to get
\begin{align*}
    \frac{1}{T} \E{\left|\int_0^T \lambda_u \left(\left\|\psi \right\|_{\LL^1}^2 -\left|\int_u^T \psi(s-u) ds\right|^2 \right) du\right|} \leq& \frac{2 \left\| \psi \right\|_{\LL^1}}{T} \E{\int_0^T \lambda_u \left|\left\|\psi \right\|_{\LL^1} -\int_u^T \psi(s-u) ds \right| du}\\
    \leq& \frac{2 \left\| \psi \right\|_{\LL^1}}{T} \sup_{x\in \RR_+}\E{\lambda_x}\int_0^T \left|\left\|\psi \right\|_{\LL^1} -\int_u^T \psi(s-u) ds \right| du \\
    \leq& \frac{2 \left\| \psi \right\|_{\LL^1}}{T} \sup_{x\in \RR_+}\E{\lambda_x} \int_0^T \int_{T-u}^{+\infty} \psi(s) ds du .
\end{align*}
Yet, Fubini's theorem yields
\begin{align*}
    \int_0^T \int_{T-u}^{+\infty} \psi(s) dsdu &= \int_0^{+\infty} \int_{\max\{T-s,0\}}^T \psi(s) duds\\
    &= \int_0^{+\infty} \left(T- \max\{T-s,0\} \right)\psi(s) ds\\
    &= \int_0^{T} s\psi(s) ds + T \int_T^{+\infty} \psi(s) ds
\end{align*}
Hence, we obtain the following upperbound:
\begin{align}
    \frac{1}{T} \E{\left|\int_0^T \lambda_u \left(\left\|\psi \right\|_{\LL^1}^2 -\left|\int_u^T \psi(s-u) ds\right|^2 \right) du\right|} \leq & \frac{2 \left\| \psi \right\|_{\LL^1}}{T} \sup_{x\in \RR_+}\E{\lambda_x} \int_0^T s\psi(s)  ds\nonumber \\
    &+ 2 \left\| \psi \right\|_{\LL^1} \sup_{x\in \RR_+}\E{\lambda_x} \int_T^\infty \psi(s)ds .\label{eq:maj_diff_psi_int}
\end{align}
We end the proof by proving that under the assumptions of the theorem, $s\mapsto s\psi(s)$ belongs to $\LL^1$. Let $A>0$. Then, since $\psi = \phi + \phi*\psi$, it holds that
\begin{align*}
    \int_0^A t \psi(t) dt = \int_0^A t \phi(t) dt + \int_0^A t\int_0^t \phi(s)\psi(t-s)ds dt.
\end{align*}
Using Fubini's theorem combined with $t=(t-s)+s$, we get
\begin{align*}
    \int_0^A t \psi(t) dt =& \int_0^A t \phi(t) dt + \int_0^A \int_s^A  (t-s)\phi(s)\psi(t-s)dt ds +\int_0^A \int_s^A  s\phi(s)\psi(t-s)dt ds\\
    =&  \int_0^A t \phi(t) dt + \int_0^A \phi(s) \int_s^A  (t-s)\psi(t-s)dt ds +\int_0^A s\phi(s) \int_s^A  \psi(t-s)dt ds.
\end{align*}
Since $\psi\in \LL^1$ and $\int_0^{+\infty} s\phi(s) ds <+\infty$, we obtain
\begin{align*}
    \int_0^A t \psi(t) dt =& \int_0^A t \phi(t) dt + \int_0^A \int_s^A  (t-s)\phi(s)\psi(t-s)dt ds +\int_0^A \int_s^A  s\phi(s)\psi(t-s)dt ds\\
    \leq &  \int_0^{+\infty} t \phi(t) dt + \int_0^A \phi(s) \int_s^A  (t-s)\psi(t-s)dt ds +\left\|\psi \right\|_{\LL^1}\int_0^{+\infty} s\phi(s) ds\\
    \leq & \left( 1+\left\|\psi \right\|_{\LL^1}\right) \int_0^{+\infty} s\phi(s) ds + \int_0^A \phi(s) \int_s^A  (t-s)\psi(t-s)dt ds
\end{align*}
The change of variable $t=t-s$ yields
\begin{align*}
    \int_0^A t \psi(t) dt \leq & \left( 1+\left\|\psi \right\|_{\LL^1}\right) \int_0^{+\infty} s\phi(s) ds + \int_0^A \phi(s) \int_0^{A-s}  t\psi(t)dt ds\\
    \leq & \left( 1+\left\|\psi \right\|_{\LL^1}\right) \int_0^{+\infty} s\phi(s) ds + \left\| \phi \right\|_{\LL^1}  \int_0^{A}  t\psi(t)dt .
\end{align*}
Here, we have also made use of $\int_0^{A-s}  t\psi(t)dt \leq \int_0^{A}  t\psi(t)dt$. Therefore, we get
\begin{align*}
    \int_0^A t \psi(t) dt \leq & \frac{ 1+\left\|\psi \right\|_{\LL^1}}{1-\left\| \phi \right\|_{\LL^1}} \int_0^{+\infty} s\phi(s) ds = \frac{1}{(1-\left\| \phi \right\|_{\LL^1})^2}\int_0^{+\infty} s\phi(s) ds.
\end{align*}
Since the function $A\mapsto \int_0^{A}  t\psi(t)dt$ is bounded and non-decreasing, it converges as $A \to +\infty$ and so 
\begin{equation}
\label{eq:maj_moment_1_psi}
    \int_0^{+\infty} t \psi(t) dt \leq \frac{1}{(1-\left\| \phi \right\|_{\LL^1})^2}\int_0^{+\infty} s\phi(s) ds.
\end{equation}
Hence, plugging \eqref{eq:maj_moment_1_psi} in \eqref{eq:maj_diff_psi_int}, we get:
\begin{align}
    \frac{1}{T} \E{\left|\int_0^T \lambda_u \left(\left\|\psi \right\|_{\LL^1}^2 -\left|\int_u^T \psi(s-u) ds\right|^2 \right) du\right|} \leq & \frac{2 \left\| \psi \right\|_{\LL^1}}{T}  \frac{\sup_{x\in \RR_+}\E{\lambda_x}}{(1-\left\| \phi \right\|_{\LL^1})^2}\int_0^{+\infty} s\phi(s) ds\nonumber \\
    &+ 2 \left\| \psi \right\|_{\LL^1} \sup_{x\in \RR_+}\E{\lambda_x} \int_T^\infty \psi(s)ds.
    \label{eq:maj_diff_psi}
\end{align}
Plugging \eqref{eq:maj_L1_fluctuation_int_stat}, \eqref{eq:maj_L1_int_stat} and \eqref{eq:maj_diff_psi} into \eqref{eq: R_T}, we get a constant $C>0$ such that for any $T>0$,
\begin{align*}
    R^T \leq C\left( \frac{1}{\sqrt{T}}+ \int_T^\infty \psi(t)dt\right).
\end{align*}
This with \eqref{eq:maj_dist_wassersteint} conclude the proof.
\end{proof}

\paragraph{Acknowledgment}: This work was supported by the French National Research Agency (ANR) through the HAPPY project (ANR-23-CE40-0007).

\end{document}